\newcommand{\e}{\mathbf{e}}
\newcommand{\fac}{{\rm !}}
\newcommand{\x}{\mathbf{x}}
\newcommand{\y}{\mathbf{y}}
\newcommand{\R}{\mathbb{R}}
\newcommand{\N}{\mathbb{N}}
\newcommand{\C}{\mathbb{C}}
\newcommand{\A}{\mathbf{A}}
\newcommand{\z}{\mathbf{z}}
\newcommand{\by}{\mathbf{y}}
\newcommand{\bx}{\mathbf{x}}
\newcommand{\ba}{\mathbf{a}}
\newcommand{\bc}{\mathbf{c}}
\newcommand{\bell}{\boldsymbol{\ell}}
\newcommand{\bxi}{\boldsymbol{\xi}}
\newcommand{\bpsi}{\boldsymbol{\psi}}
\newcommand{\om}{\boldsymbol{\Omega}}
\journalname{BIT}
\begin{document}

\title{Simple formula for  integration of polynomials on a simplex
\thanks{Research funded by the European Research Council (ERC) under the European's Union Horizon 2020 research and innovation program (grant agreement 666981 TAMING)}}

\subtitle{Simple formula for integration}


\author{Jean B. Lasserre}


\institute{J.B. Lasserre \at
              LAAS-CNRS and Institute of Mathematics, University of Toulouse, France \\
              Tel.: +33-5-61336415\\
              Fax: +33-5-61336936\\
              \email{lasserre@laas.fr} }          

\date{Received: date / Accepted: date}

\maketitle

\begin{abstract}
We show that integrating a polynomial $f$ of degree $t$ on an arbitrary simplex (with respect to Lebesgue measure)
reduces to evaluating $t$ homogeneous related Bombieri polynomials of degree $j=1,2,\ldots,t$, each at a unique point 
$\bxi_j$ of the simplex. This new and very simple formula 
could be exploited in finite (and extended finite) element methods, as well as in applications where such integrals 
must be evaluated. A similar result also holds for a certain class of positively homogeneous functions that are integrable on the canonical simplex.
\keywords{Numerical integration \and simplex \and Laplace transform}
\subclass{65D30 \and 78M12 \and  44A10}
\end{abstract}

\section{Introduction}

We consider the  problem of integrating a polynomial $f\in\R[\bx]$ 
on an arbitrary simplex of $\R^n$ and with respect to the Lebesgue measure.
After an affine transformation this problem is completely equivalent 
to integrating a related polynomial of same degree on the canonical simplex 
$\boldsymbol{\Delta}=\{\bx\in\R^n_+:\e^T\bx\leq 1\}$ where $\e=(1,\ldots,1)\in\R^n$.
Therefore the result is first proved on $\boldsymbol{\Delta}$ and then  transferred back
to the original simplex. The result is also extended to 
positively homogeneous functions of the form $\sum_{\alpha\in M} f_\alpha x_1^{\alpha_1}\cdots x_n^{\alpha_n}$
where $M\subset\R^n$ is finite and $-1<\alpha_i\in \R,\:\forall i$. \\
 
 \subsection{Background}
 
In addition to being a mathematical problem of independent interest, 
integrating a polynomial on a polytope has important applications
in e.g. computational geometry,  in approximation theory (constructing splines), in finite element methods, 
to cite a few; see e.g. the discussion in Baldoni et al. \cite{baldoni}. 
In particular, because of applications 
in finite (and extended finite) element methods and also for volume computation
in the Natural Element Method (NEM), there has been a recent renewal of interest in
developing efficient integration numerical schemes for polynomials on convex and non-convex polytopes

For instance the HNI (Homogeneous Numerical Integration)
technique developed in Chin et al. \cite{chin} and based on \cite{lass-poly}, has been proved to be particular efficient in
some finite (and extended finite) element methods; see e.g. Antonietti et al. \cite{galerkin},
Chin and Sukumar \cite{suku}, Nagy and Benson \cite{nagy} for intensive experimentation, Zhang et al. \cite{zhang} for NEM,
Frenning \cite{frenning} for DEM (Discrete Element Method), Leoni and Shokef \cite{leoni} for volume computation. 
For exact volume computation of polytopes in computational geometry, the interested reader is also referred to B\"ueler et al. \cite{bueler} 
and references therein.

For integrating a polynomial on a polytope, one possible route is to
use the HNI method developed in \cite{lass-poly,chin} and also extended in \cite{galerkin},
without partitioning the polytope in simplices. Another direction is to consider
efficient numerical schemes for {\em simplices} since quoting Baldoni et al. \cite{baldoni} 
``{\em among all polytopes, simplices are the fundamental case to consider for integration since any convex
polytope can be triangulated into finitely many simplices}". 
In  \cite{baldoni} the authors analyze the computational complexity of the latter case and 
describe several formulas; in particular they show that the problem is NP-hard in general.
However, if the number of variables is fixed then one may integrate polynomials of linear forms
efficiently (with Straight-Line program for evaluation) and if the degree is fixed one may 
integrate any polynomial  efficiently. They also describe several formulas in closed form
for integrating powers of linear forms \cite[Corollary 12]{baldoni} and also
arbitrary homogeneous polynomials \cite[Proposition 18]{baldoni} and \cite{lass-avra},
all stated in terms of a summation over vertices of the simplex.

\subsection{Our main result} 

With $f\in\R[\x]$, $\x\mapsto f(\x)=\sum_\alpha f_\alpha\,\x^\alpha$ is associated the Bombieri-type polynomial
$\hat{f}\in\R[\x]$:
\begin{equation}
\label{f-hat}
\x\mapsto \hat{f}(\x)\,=\,\sum_{\alpha\in\N^n}\hat{f}_\alpha\,\x^\alpha\,=\,\sum_{\alpha\in\N^n}\,
\alpha_1\fac\cdots\alpha_n\fac\,f_\alpha\,\x^\alpha,\qquad\x\in\R^n.\end{equation}

We establish the following simple formula:
\begin{theorem}
\label{th-main}
Let $\boldsymbol{\Delta}\subset\R^n$ be the canonical simplex.
If $f\in\R[\x]$ is a polynomial of total degree $t\in\N$ then with $\bxi_j:=\e/((n+1)\cdots (n+j))^{1/j}$:
\begin{equation}
\label{th-main-1}
\int_{\boldsymbol{\Delta}}f\,d\x\,=\,\frac{1}{n{\rm !}}\:(\hat{f}_0+\sum_{j=1}^t
\hat{f}_j(\bxi_{j})),
\end{equation}
where $f=\sum_{j=0}^t f_j$ and each $f_j$ is a homogeneous form\footnote{$f_j$ is the unique form of degree $j$ which is the sum of all monomials of degree $j$ of $f$ (with their coefficient).}  of degree $j$. 

Similarly, let $M\subset\R^n$ be a finite set and let $f:(0,+\infty)^n\to\R$ with
\begin{equation}
\label{def-pos-homog}
\x\mapsto f(\x)\,:=\,\sum_{\alpha\in M} f_\alpha\,\x^\alpha,\quad\alpha\in\R^n\,,\quad \alpha_i>-1,\:i=1,\ldots,n,\end{equation}
be positively homogeneous of degree $t\in\R$ (i.e. $\sum_i\alpha_i=t$ for all $\alpha\in M$). Then
\begin{equation}
\label{th-main-2}
\int_{\boldsymbol{\Delta}}f\,d\x\,=\,\frac{1}{n{\rm !}}\:\hat{f}(\bxi_t),
\end{equation}
where $\bxi_t:=\frac{1}{\theta}\,\e\,\in\boldsymbol{\Delta}$ with $\theta^t=\Gamma(1+n+t)/\Gamma(1+n)$, and
\[\x\mapsto \hat{f}(\x)\,:=\,\sum_{\alpha\in\,M} \Gamma(1+\alpha_1)\cdots\Gamma(1+\alpha_n)\,f_\alpha\,\x^\alpha.\]
\end{theorem}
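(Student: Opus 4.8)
The plan is to reduce everything to a single Laplace‑transform identity and then carry out elementary bookkeeping with the Gamma function and Euler's homogeneity relation. The cornerstone will be the following lemma: if $h:(0,\infty)^n\to\R$ is positively homogeneous of degree $s\in\R$ with $s>-n$ and absolutely integrable on $\boldsymbol{\Delta}$, then
\[
\int_{\boldsymbol{\Delta}}h\,d\x\;=\;\frac{1}{\Gamma(n+s+1)}\int_{\R^n_+}e^{-\e^T\x}\,h(\x)\,d\x .
\]
To prove it I would set $\boldsymbol{\Delta}_\tau:=\{\x\in\R^n_+:\e^T\x\le\tau\}=\tau\boldsymbol{\Delta}$, so that the substitution $\x=\tau\y$ together with homogeneity gives $\int_{\boldsymbol{\Delta}_\tau}h\,d\x=\tau^{\,n+s}\int_{\boldsymbol{\Delta}}h\,d\y$; then writing $e^{-\e^T\x}=\int_{\e^T\x}^{\infty}e^{-\tau}d\tau$ and applying Fubini turns the right‑hand side into $\int_0^\infty e^{-\tau}\big(\int_{\boldsymbol{\Delta}_\tau}h\,d\x\big)d\tau=\big(\int_{\boldsymbol{\Delta}}h\,d\y\big)\,\Gamma(n+s+1)$.

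Feeding the monomial $h(\x)=\x^\alpha$ (with $\alpha_i>-1$, so that $\int_{\R^n_+}e^{-\e^T\x}\x^\alpha\,d\x=\prod_i\Gamma(1+\alpha_i)$) into the lemma yields the Dirichlet formula
\[
\int_{\boldsymbol{\Delta}}\x^\alpha\,d\x\;=\;\frac{\prod_i\Gamma(1+\alpha_i)}{\Gamma(n+1+|\alpha|)},
\]
which for $\alpha\in\N^n$ reads $\prod_i\alpha_i!/(n+|\alpha|)!$. Integrability of $\x^\alpha$ on $\boldsymbol{\Delta}$ when some $\alpha_i\in(-1,0)$ follows by comparison with $[0,1]^n$.

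For \eqref{th-main-1} I would split $f=\sum_{j=0}^tf_j$ into homogeneous forms and integrate termwise: for $j\ge 1$, the Dirichlet formula and the definition \eqref{f-hat} evaluated at $\e$ give $\int_{\boldsymbol{\Delta}}f_j\,d\x=\tfrac1{(n+j)!}\sum_{|\alpha|=j}\alpha_1!\cdots\alpha_n!\,f_\alpha=\hat f_j(\e)/(n+j)!$; since $\hat f_j$ is homogeneous of degree $j$ and $\bxi_j=\big((n+1)\cdots(n+j)\big)^{-1/j}\e$, this equals $\hat f_j(\bxi_j)/n!$, while the $j=0$ term contributes $f_0\,\vol(\boldsymbol{\Delta})=\hat f_0/n!$; summing gives the claim, and $\e^T\bxi_j\le 1$ because each factor $n+i$ exceeds $n$, so $\bxi_j\in\boldsymbol{\Delta}$. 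For \eqref{th-main-2}, all $\alpha\in M$ have $|\alpha|=t$, so the Dirichlet formula gives $\int_{\boldsymbol{\Delta}}f\,d\x=\hat f(\e)/\Gamma(1+n+t)$ directly; since $\hat f$ is positively homogeneous of degree $t$ on $(0,\infty)^n$ and $\theta^t=\Gamma(1+n+t)/\Gamma(1+n)$, we have $\hat f(\e)=\theta^t\hat f(\e/\theta)=\big(\Gamma(1+n+t)/\Gamma(1+n)\big)\hat f(\bxi_t)$, whence $\int_{\boldsymbol{\Delta}}f\,d\x=\hat f(\bxi_t)/n!$. (For $t\ge 0$ one checks $\bxi_t\in\boldsymbol{\Delta}$ as before; in any case $\hat f$ is only evaluated along the ray $\R_+\e$, so the identity is insensitive to this.)

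The step I expect to be the real work is the measure‑theoretic justification of the lemma: when $h$ changes sign — and, in the second part, blows up near $\partial\boldsymbol{\Delta}$ — one must secure absolute integrability before invoking the change of variables and Fubini. This is handled by dominating $f$ by $\sum_\alpha|f_\alpha|\,\x^\alpha$ and using the integrability of each monomial established above; once that is in place, the remainder is just routine algebra with $\Gamma$ and with the scaling $\hat f_j(\lambda\x)=\lambda^j\hat f_j(\x)$.
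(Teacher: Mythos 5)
Your proposal is correct and follows essentially the same route as the paper: the core identity $\int_{\boldsymbol{\Delta}}h\,d\x=\Gamma(1+n+s)^{-1}\int_{\R^n_+}e^{-\e^T\x}h\,d\x$ is exactly the paper's Theorem 2, proved there by the same homogeneity-plus-Fubini argument (phrased as identifying the one-dimensional Laplace transform of $y\mapsto\int_{\boldsymbol{\Delta}_y}h$ rather than evaluating directly at $\lambda=1$), and the remaining steps --- Gamma-function evaluation of the monomial integrals, termwise treatment of the homogeneous components, and rescaling $\hat f_j(\e)$ to $\hat f_j(\bxi_j)$ by homogeneity --- match the paper's Corollary 1 and Proposition 1. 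Your explicit attention to absolute integrability (dominating by $\sum_\alpha|f_\alpha|\x^\alpha$) is a welcome addition that the paper handles only by hypothesis.
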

Theorem \ref{th-main} states that integrating a polynomial $f$ of degree $t$ on the canonical simplex 
$\boldsymbol{\Delta}$ can be done by evaluating each Bombieri form $\hat{f}_j$ at a unique point $\bxi_j\in\boldsymbol{\Delta}$.
In addition all points $(\bxi_j)$ are on a line between the origin and the point $\e/n$
on the boundary of $\boldsymbol{\Delta}$. To the best of our knowledge, and despite their simplicity,
we have not been able to find formula \eqref{th-main-1} or  \eqref{th-main-2} in the literature, even if they can be 
obtained in several relatively straightforward manners from previous results in the literature. 

\emph{The point that we make in \eqref{th-main-1} is to relate $\int_{\boldsymbol{\Delta}}\,f\,d\x$ to
point evaluation of its Bombieri-polynomial at only $t$ very specific points of the simplex $\boldsymbol{\Delta}$; and similarly for positively homogeneous functions of the form \eqref{def-pos-homog}.}

Similarly, integrating a polynomial $f$ of degree $t$ on an arbitrary
simplex $\om\subset\R^n$ can be done by evaluating related polynomials $h_j$ of degree $j$,
each at a certain point of $\om$. Indeed an arbitrary (full-dimensional) simplex 
 can be mapped to the canonical simplex $\boldsymbol{\Delta}$ by an affine  transformation $\by=\A\,(\x-\ba)$ for some
 real nonsingular  matrix $\A$ and vector $\ba\in\R^n$. Therefore \eqref{th-main-1} translates
 to a similar formula on $\om$ with ad-hoc polynomials and $t$ aligned points $\x_j\in\om$.
  
 Hence formula \eqref{th-main-1} is much simpler than those in \cite{baldoni,lass-avra}. In particular 
 it is valid for an arbitrary polynomial and there are only  $t$ points involved if the degree 
 of the polynomial is $t$. In contrast, for integrating a $t$-power of a linear form,
 the formula in  \cite{baldoni} requires a summation at the $(n+1)$ vertices and 
in \cite{lass-avra}, integrating a form of degree $t$ requires ${n+t\choose n}$ evaluations 
of a multilinear form at the $(n+1)$ vertices. 
 
 We would like to emphasize that \eqref{th-main-1} resembles  a cubature formula but is {\em not}. A cubature
formula is of the form:
\begin{equation}
\label{cubature}
\int_{\boldsymbol{\Delta}} f\,d\x\,=\,\sum_{j=1}^s w_j\,f(\x_j),\quad \forall f\in\R[\x]_t\,,\end{equation}
for some integer $s$ and points $(\x_j)\subset\boldsymbol{\Delta}$ with associated weights $w_j$, $j=1,\ldots,s$.
However Theorem \ref{th-main} suggests that as long as polynomials are concerned,
the simpler alternative formula \eqref{th-main-1} could be preferable 
to cubature formulas involving many points. The point of view is different. Instead of evaluating
the {\em single} polynomial $f$ of degree $t$ at several points $\x_j$ in \eqref{cubature}, in \eqref{th-main-1} one evaluates $t$ 
{\em other} polynomials $\hat{f}_j$ of degree $j$, $j=1,\ldots t$ (simply related to $f$); each polynomial
$\hat{f}_j$ is evaluated at a single point only.

Our technique of proof is relatively simple. It uses (i) Laplace transform technique and homogeneity
to reduce integration on $\boldsymbol{\Delta}$ with respect to (w.r.t.) Lebesgue measure to integration on the nonnegative orthant
w.r.t. exponential density; this technique was already advocated in \cite{lass-zeron} for computing certain
multivariate integrals and in \cite{volume} for volume computation of polytopes. Then (ii) integration of monomials w.r.t. exponential density can be done in closed-form
and results in a simple formula in closed form.

Interestingly and somehow related, recently Kozhasov et al. \cite{sturmfels} have considered
integration of a ``monomial" $\y^{\alpha-\e}$  (with $0<\alpha\in\R^n$) 
with respect to exponential density on the positive orthant.  In \cite{sturmfels} 
the density $\y\mapsto \prod_i \Gamma(\alpha_i)\,y_i^{\alpha_i-1}$
is called the {\em Riesz kernel} of the monomial $\x^{-\alpha}$. The Riesz kernel 
offers a certificate of positivity for a function to be completely monotone (a strong positivity property of functions on cones).

\section{Main result}

\subsection{Notation, definitions and preliminary result}
Let $\R[\x]$ denote the ring of real polynomials in the variable $\x=(x_1,\ldots,x_n)$.
With $\N$ the set of natural numbers, a polynomial $f\in\R[\x]$ is written
\begin{equation}
\label{def-f}
\x\mapsto f(\x)=\sum_{\alpha\in\N^n}f_\alpha\,\x^\alpha,\end{equation}
in the canonical basis of monomials. A polynomial $f$
is homogeneous of degree $t$ if $f(\lambda\,\x)=\lambda^t\,f(\x)$ for all $\lambda\in\R$ 
and all $\x\in\R^n$. For $\alpha\in\R^n$ let $\vert\alpha\vert:=\sum_i\vert\alpha_i\vert$.
Let $\R^n_+:=\{\x\in\R^n:\x\geq0\}$ denotes the positive orthant of $\R^n$.
A function $f:(\R_+\setminus\{0\})^n\to\R$ is positively homogeneous of degree $t\in\R$ if
\[f(\lambda\,\x)\,=\,\lambda^t\,f(\x),\quad \forall \x\in (\R_+\setminus\{0\})^n,\:\forall\lambda>0,\]
and a polynomial $f$ is homogeneous of degree $t\in\R$ if
\[f(\lambda\,\x)\,=\,\lambda^t\,f(\x),\quad \forall \lambda\in\R,\:\forall \x\in\R^n.\]

Denote by $\boldsymbol{\Delta} \subset\R^n_+$ the canonical simplex $\{\x\in\R^n_+: \e^T\x\leq 1\}$ where
$\e=(1,\ldots,1)\in\R^n$. For $0<\x\in\R^n$ let $1/\x=(\frac{1}{x_1},\ldots,\frac{1}{x_n})\in\R^n$.

With a polynomial $f\in\R[\x]$ in \eqref{def-f}
is associated the ``Bombieri" polynomial:
\begin{equation}
\label{hat}
\x\mapsto \hat{f}(\x)\,=\,\sum_{\alpha\in\N^n}\hat{f}_\alpha\,\x^\alpha\,=\,\sum_{\alpha\in\N^n}\,f_\alpha\,
\alpha_1\fac\cdots\alpha_n\fac\,\x^\alpha,\qquad\x\in\R^n.
\end{equation}

\subsection{Main result}~

After an affine transformation, integrating $f$ on an arbitrary full-dimensional simplex $\om\subset\R^n$
reduces to integrating a related polynomial of same degree on the canonical simplex 
$\boldsymbol{\Delta}=\{\x\in\R^n_+:\e^T\x\leq 1\}$ where $\e=(1,\ldots,1)\in\R^n$. Therefore in this section we consider integrals
of polynomials (and a certain type of positively homogeneous functions) on the canonical simplex $\boldsymbol{\Delta}$.

\begin{theorem}
\label{th1}
Let $0<\z\in\R^n$ and let $\boldsymbol{\boldsymbol{\boldsymbol{\Delta}}}_\z=\{\x\in\R^n_+:\:\z^T\x\leq1\}$.
Let  $f$ be a positively homogeneous function of degree $t\in\R$ on $(\R_+\setminus\{0\})^n$ 
such that $t>-(1+n)$ and $\int_{\boldsymbol{\boldsymbol{\Delta}}_\z}\vert f\vert \,d\x <\infty$. Then:
\begin{equation}
\label{th1-1}
\displaystyle\int_{\boldsymbol{\boldsymbol{\Delta}}_\z} f(\x)\,d\x\,=\,\frac{1}{\Gamma(1+n+t)}\,\displaystyle\int_{\R^n_+} f(\x)\,\exp(-\z^T\x)\,d\x.
\end{equation}
If $f$ is a homogeneous polynomial of degree $t$ (hence $t\in\N$) then
\begin{equation}
\label{th1-2}
\displaystyle\int_{\boldsymbol{\boldsymbol{\Delta}}_\z} f(\x)\,d\x\,=\,\frac{1}{(n+t){\rm !}}\,\hat{f}(1/\z)\,\frac{1}{\z^\e},
\end{equation}
and in particular with $\z=\e$:
\begin{equation}
\label{th1-3}
\displaystyle\int_{\boldsymbol{\Delta}} f(\x)\,d\x\,=\,\frac{1}{(n+t){\rm !}}\,\hat{f}(\e)\,=\,\frac{1}{n{\rm !}}\,\hat{f}(\bxi_{t}),
\end{equation}
where $\bxi_{t}=\frac{1}{\theta}\,\e\in\boldsymbol{\Delta}$ and $\theta^t=(n+1)\cdots (n+t)$.
\end{theorem}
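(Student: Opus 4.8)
The plan is to follow the two-step strategy announced in the introduction. \emph{Step one:} a scaling/Laplace-transform identity that replaces the integral over the bounded region $\boldsymbol{\Delta}_\z$ by an integral over the whole orthant $\R^n_+$ against the exponential weight $\exp(-\z^T\x)$ — this is \eqref{th1-1} and holds for any positively homogeneous $f$. \emph{Step two:} in the polynomial case, evaluate that orthant integral in closed form one monomial at a time, and recognize the Bombieri polynomial $\hat f$ in the answer — this gives \eqref{th1-2}, and \eqref{th1-3} is then a one-line specialization.

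For \eqref{th1-1} I would introduce the distribution function $F(s):=\int_{\{\x\in\R^n_+:\,\z^T\x\le s\}}f(\x)\,d\x$ for $s>0$. The dilation $\x=s\,\bu$ has Jacobian $s^n$ and, by positive homogeneity of degree $t$, $f(s\bu)=s^t f(\bu)$; hence $F(s)=s^{\,n+t}F(1)=s^{\,n+t}\int_{\boldsymbol{\Delta}_\z}f\,d\x$. Writing $\exp(-\z^T\x)=\int_{\z^T\x}^{\infty}e^{-s}\,ds$ and interchanging the order of integration then gives
\[
\int_{\R^n_+}f(\x)\,e^{-\z^T\x}\,d\x\;=\;\int_0^\infty e^{-s}F(s)\,ds\;=\;F(1)\int_0^\infty s^{\,n+t}e^{-s}\,ds\;=\;\Gamma(1+n+t)\int_{\boldsymbol{\Delta}_\z}f\,d\x ,
\]
the middle Gamma integral converging exactly because $n+t>-1$. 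To justify the interchange one first runs the identical computation for $|f|$ (still positively homogeneous of degree $t$, since $\lambda>0$), getting $\int_{\R^n_+}|f|\,e^{-\z^T\x}\,d\x=\Gamma(1+n+t)\int_{\boldsymbol{\Delta}_\z}|f|\,d\x<\infty$ by hypothesis; Tonelli/Fubini then legitimizes everything for $f$ itself. This integrability bookkeeping is the only genuinely analytic point, and I expect it to be the main (though mild) obstacle.

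For \eqref{th1-2}, write the homogeneous polynomial $f=\sum_{|\alpha|=t}f_\alpha\x^\alpha$ (so $t\in\N$ and \eqref{th1-1} applies) and factor the orthant integral over coordinates using $\int_0^\infty x^k e^{-zx}\,dx=k\fac\,z^{-(k+1)}$:
\[
\int_{\R^n_+}f(\x)\,e^{-\z^T\x}\,d\x\;=\;\sum_{|\alpha|=t}f_\alpha\prod_{i=1}^n\frac{\alpha_i\fac}{z_i^{\alpha_i+1}}\;=\;\frac{1}{\z^\e}\sum_{|\alpha|=t}f_\alpha\Big(\prod_{i=1}^n\alpha_i\fac\Big)\Big(\tfrac1\z\Big)^{\!\alpha}\;=\;\frac{1}{\z^\e}\,\hat f(1/\z),
\]
by the definition \eqref{hat} of $\hat f$. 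Dividing by $\Gamma(1+n+t)=(n+t)\fac$ yields \eqref{th1-2}.

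Finally, \eqref{th1-3} is the case $\z=\e$: then $\z^\e=1$ and $1/\z=\e$, so $\int_{\boldsymbol{\Delta}}f\,d\x=\hat f(\e)/(n+t)\fac$. Since $\hat f$ is, like $f$, homogeneous of degree $t$, we have $\hat f(\e)=\theta^t\hat f(\e/\theta)=\theta^t\hat f(\bxi_t)$ with $\theta^t=(n+1)\cdots(n+t)=(n+t)\fac/n\fac$, and substituting gives $\int_{\boldsymbol{\Delta}}f\,d\x=\hat f(\bxi_t)/n\fac$. A one-line check that $\bxi_t\in\boldsymbol{\Delta}$ closes the argument: each factor $n+j$ exceeds $n$, so $\theta=\big((n+1)\cdots(n+t)\big)^{1/t}>n$ and therefore $\e^T\bxi_t=n/\theta<1$.
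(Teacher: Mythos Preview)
Your proof is correct and follows essentially the same route as the paper: define the cumulative function $F(s)=h(s)=\int_{\{\z^T\x\le s\}}f\,d\x$, use positive homogeneity to get $F(s)=s^{n+t}F(1)$, swap the order of integration via Fubini--Tonelli to relate $\int_{\R^n_+}f\,e^{-\z^T\x}\,d\x$ to $\Gamma(1+n+t)F(1)$, and then evaluate monomials explicitly for \eqref{th1-2}--\eqref{th1-3}. The only cosmetic difference is that the paper carries an extra Laplace parameter $\lambda$ and identifies two expressions for $H(\lambda)$, whereas you work directly at $\lambda=1$; your version is slightly leaner and also spells out the Fubini justification and the membership $\bxi_t\in\boldsymbol{\Delta}$ more carefully.
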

\begin{proof}
Let $0<\z\in\R^n$ be fixed and let $h:\R\to\R$ be the function:
\[y\mapsto h(y)\,:=\,\int_{\{\x\in\R^n_+: \z^T\x\leq y\}}f(\x)\,d\x.\]
Observe that $h=0$ on $(-\infty,0]$ and in addition, $h$ is positively homogeneous of degree $n+t$,
so that $h(y)=y^{n+t}\,h(1)$ (well defined since $h(1)$ is finite). As $t>-(n+1)$,
its Laplace transform $H:\C\to\C$ is well defined and reads:
\begin{equation}
\label{laplace}
\lambda\mapsto H(\lambda)\,=\,\frac{\Gamma(1+n+t)}{\lambda^{n+t+1}}\,h(1),\quad \lambda\in\C,\:\Re(\lambda)>0.\end{equation}
On the other hand, for real $\lambda>0$:
\begin{eqnarray*}
H(\lambda)&=&\int_0^\infty h(y)\,\exp(-\lambda\,y)\,dy\\
&=&\int_0^\infty \exp(-\lambda\,y)\,\left(\int_{\{\x\in\R^n_+:\z^T\x\leq y\}}f(\x)\,d\x\right)\,dy\\
&=&\int_{\R^n_+}f(\x)\left(\int_{\z^T\x}^\infty \exp(-\lambda\,y)\,dy)\right)\,d\x\quad\mbox{[by Fubini-Tonelli]}\\
&=&\frac{1}{\lambda}\int_{\R^n_+}f(\x)\,\exp(-\lambda\,\z^T\x)\,d\x\\
&=&\frac{1}{\lambda^{n+t+1}}\int_{\R^n_+}f(\x)\,\exp(-\z^T\x)\,d\x\,.
\end{eqnarray*}
Identifying with \eqref{laplace} yields \eqref{th1-1}.
Next, to get \eqref{th1-2} observe that for $0<z\in\R$, $\int_0^\infty y^k\exp(-z\,y)\,dy=k\fac/z^{k+1}$ for all $k\in\N$, and therefore:
\[\int_{\R^n_+}\y^\alpha\,\exp(-\z^T\y)\,d\y\,=\,\frac{1}{\z^\e}\alpha_1\fac\cdots\alpha_n\fac\,(1/\z)^\alpha,\]
for every $\alpha\in\N^n$. Summing up yields the result \eqref{th1-2} and \eqref{th1-3} with $\z=\e$. Finally, the last equality
of \eqref{th1-3} is obtained by homogeneity of $\hat{f}$.
\end{proof}
As the reader can see, formula \eqref{th1-3} is extremely simple and only requires evaluating $\hat{f}$ at 
the unique point
$\bxi_t\in\boldsymbol{\Delta}$. This in contrast to the formula in \cite{lass-avra} which requires a sum
of ${n+t\choose n}$ terms, each involving evaluations at the vertices of $\boldsymbol{\Delta}$.\\

\begin{remark}
\label{rem-kernel}
Notice that \eqref{th1-1} can also be interpreted as follows: Let $f:\R^n_+\to\R_+$,
be positively homogeneous of degree $t\in\R$ and as in Theorem \ref{th1}.
Define  the function $h_f:\R^n_+\to\R_+$, by: $\z\mapsto 
\Gamma(1+n+t)\int_{\boldsymbol{\Delta}_\z}f(\x)\,d\x$. 
Then $h_f$ is the multidimensional Laplace transform of $f$, or equivalently in the terminology of Kozhasov et al. \cite{sturmfels},
$h_f$ is completely monotone\footnote{A real-valued function $f:(\R\setminus\{0\})^n\to\R$
is completely monotone if $(-1)^k\frac{\partial^k f}{\partial x_{i_1}\cdots\partial x_{i_k}}(\x)\geq0$ for all
$\x\in(\R\setminus\{0\})^n$ and for all index sequences $1\leq i_1\leq \cdots\leq i_k\leq n$
of arbitrary length $k$.}
 (by the Bernstein-Hausdorff-Widder-Choquet theorem; see \cite[Theorem 2.5]{sturmfels}). 
Next, let $C\subset\R^n$ be an open cone with dual cone $C^*$. If $p\in\R[\x]$, $s\in\R$, and
\[p^s(\x)\,=\,\int_{C^*}\exp(-\y^T\x)\,d\mu(\y),\quad \forall\x\in\,C,\]
for some Borel measure $\mu$ on $C^*$, then $\mu$ is called a Riesz measure. In addition if $\mu$ has a density $q$ with respect to Lebesgue measure on $C^*$ then $q$ is called the Riesz kernel of $p^s$; see \cite[p. 4]{sturmfels}. \hspace{0.5cm}$\clubsuit$
\end{remark}
Hence from Remark \ref{rem-kernel}, with $C=(\R\setminus\{0\})^n$ and $d\mu=fd\x$, we obtain:
\begin{proposition}
\label{prop-aux}
For every positively homogeneous $f:(\R_+\setminus\{0\})^n\to\R_+$ of total degree $t\in\R$ as in Theorem \ref{th1},  the function
$h_f:\R^n_+\to\R_+$: 
\begin{equation}
\z\mapsto \quad h_f(\z)\,:=\,\Gamma(1+n+t)\,\displaystyle \int_{\{\x\in\R^n_+: \z^T\x\leq 1\}}f(\x)\,d\x\,,
\end{equation}
is completely monotone. In addition if $f=\x^{\alpha-\e}$ with $0<\alpha\in\R^n$ then
$f$ is the Riesz kernel of the function $\x\mapsto \x^{-\alpha}\cdot\prod_i\Gamma(\alpha_i)$
on $(\R_+\setminus\{0\})^n$.
\end{proposition}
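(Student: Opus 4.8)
The plan is to read off everything from formula \eqref{th1-1}. Since $f$ satisfies the hypotheses of Theorem \ref{th1}, applying \eqref{th1-1} gives, for every $\z>0$,
\[
h_f(\z)\,=\,\Gamma(1+n+t)\int_{\boldsymbol{\Delta}_\z}f\,d\x\,=\,\int_{\R^n_+}f(\x)\,\exp(-\z^T\x)\,d\x,
\]
so that $h_f$ is precisely the multidimensional Laplace transform on $\R^n_+$ of the nonnegative density $f\,d\x$. Once differentiation under the integral sign is justified, complete monotonicity on the interior of $\R^n_+$ will be immediate: for any index sequence $1\leq i_1\leq\cdots\leq i_k\leq n$,
\[
(-1)^k\,\frac{\partial^k h_f}{\partial z_{i_1}\cdots\partial z_{i_k}}(\z)\,=\,\int_{\R^n_+}x_{i_1}\cdots x_{i_k}\,f(\x)\,\exp(-\z^T\x)\,d\x\,\geq\,0,
\]
because $f\geq0$ and each $x_{i_j}\geq0$ on $\R^n_+$. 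Equivalently, one could simply invoke the Bernstein--Hausdorff--Widder--Choquet theorem recalled in Remark \ref{rem-kernel}, which characterizes Laplace transforms of nonnegative measures on the orthant as exactly the completely monotone functions.

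The single nonroutine point, and the main obstacle, is legitimizing this differentiation under the integral. I would fix an interior point $\z_0>0$, pick $\delta>0$ small enough that $\z':=\z_0-\delta\e>0$, and work on the box $B=\{\z:\z'\leq\z\leq\z_0+\delta\e\}$. On $B$ one has $\exp(-\z^T\x)\leq\exp(-(\z')^T\x)$, so the integrand of the $k$-th derivative is dominated by $\x\mapsto x_{i_1}\cdots x_{i_k}\,\vert f(\x)\vert\,\exp(-(\z')^T\x)$. The map $\x\mapsto x_{i_1}\cdots x_{i_k}\,\vert f(\x)\vert$ is positively homogeneous of degree $t+k>-(1+n)$ and, since $\vert f\vert$ is integrable on the bounded set $\boldsymbol{\Delta}_{\z'}$ and the monomial factor is bounded there, it too is integrable on $\boldsymbol{\Delta}_{\z'}$; hence \eqref{th1-1} applied to it shows the dominating function is integrable over $\R^n_+$. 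This licenses differentiation under the integral to all orders on the interior of $\R^n_+$, so the displayed inequalities hold there and $h_f$ is completely monotone.

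For the Riesz-kernel assertion I would specialize to $f=\x^{\alpha-\e}$ with $0<\alpha\in\R^n$; then $f$ is positively homogeneous of degree $t=\vert\alpha\vert-n>-(1+n)$ and is integrable on $\boldsymbol{\Delta}_\z$ precisely because each exponent $\alpha_i-1$ exceeds $-1$. Using the elementary one-dimensional identity $\int_0^\infty y^{\alpha_i-1}\exp(-z_iy)\,dy=\Gamma(\alpha_i)\,z_i^{-\alpha_i}$ coordinatewise and taking the product over $i$ yields
\[
h_f(\z)\,=\,\int_{\R^n_+}\x^{\alpha-\e}\,\exp(-\z^T\x)\,d\x\,=\,\Big(\prod_{i=1}^n\Gamma(\alpha_i)\Big)\,\z^{-\alpha}.
\]
Thus $\z\mapsto\z^{-\alpha}\,\prod_i\Gamma(\alpha_i)$ is the multidimensional Laplace transform of the measure $d\mu=f\,d\x=\x^{\alpha-\e}\,d\x$ on $C^{*}=\R^n_+$, which is exactly the statement, in the terminology recalled in Remark \ref{rem-kernel} (with $C=(\R_+\setminus\{0\})^n$), that $f$ is the Riesz kernel of that function. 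Apart from the dominated-convergence bookkeeping just described, no step requires more than substitution into \eqref{th1-1} and a Gamma-integral evaluation.
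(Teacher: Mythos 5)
Your proposal is correct and follows the same route as the paper, which presents Proposition \ref{prop-aux} as an immediate consequence of Remark \ref{rem-kernel}: formula \eqref{th1-1} identifies $h_f$ as the multidimensional Laplace transform of the nonnegative measure $f\,d\x$ on $\R^n_+$, complete monotonicity then follows from the Bernstein--Hausdorff--Widder--Choquet theorem, and the Riesz-kernel claim is the coordinatewise Gamma-integral computation. The only difference is that you additionally justify differentiation under the integral sign (which the paper omits by invoking the Bernstein--Hausdorff--Widder--Choquet characterization directly); this is a harmless and correct elaboration.
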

Proposition \ref{prop-aux} is in the spirit of \cite{sturmfels}[Proposition 2.7].
Next given $\bell,\x\in\R^n$ denote by $\bell\cdot\x$ the scalar product $\bell^T\x$ and by 
$\bell\otimes\x\in\R^n$ the vector $(\ell_1x_1,\ldots,\ell_nx_n)$. Finally let
$\mathrm{E}_t\in\R[\x]$ the homogeneous polynomial of degree $t$ with all coefficients equal to $1$,
that is, $\x\mapsto \mathrm{E}_t(\x):=\sum_{\vert\alpha\vert=t} \x^\alpha$.

As a consequence of Theorem \ref{th1} we obtain:
\begin{corollary}
\label{cor1}
(i) Let $f$ be polynomial of degree $t$ and write $f=\sum_{j0}^t f_j$ where each $f_j$ is homogeneous of degree $j$.
Then 
\begin{equation}
\label{th-final-1}
\int_{\boldsymbol{\Delta}} f(\y)\,d\y\,=\,{\rm vol}(\boldsymbol{\Delta})\,(\hat{f}_0+\sum_{j=1}^t \hat{f}_j(\bxi_j)\,),
\end{equation}
where $\bxi_j=\e/((n+1)\cdots (n+j))^{1/j}$ and $\hat{f}_j$ is as in \eqref{hat}.

(ii) For every $\bell\in\R^n$ and $t\in\N$:
\begin{equation}
\label{th-final-2}
\int_{\boldsymbol{\Delta}} (\bell\cdot\x)^t\,d\x\,=\,\frac{1}{(n+t){\rm !}}\hat{f}(\e)\,=\,\frac{t{\rm !}}{(n+t){\rm !}}\,\mathrm{E}_t(\bell).
\end{equation}
\end{corollary}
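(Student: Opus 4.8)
The plan is to derive both parts directly from Theorem \ref{th1}, essentially by linearity and by specializing the homogeneous formula \eqref{th1-3}. For part (i), I would start from the decomposition $f=\sum_{j=0}^t f_j$ into homogeneous forms and use linearity of the integral to write $\int_{\boldsymbol{\Delta}} f\,d\y = \sum_{j=0}^t \int_{\boldsymbol{\Delta}} f_j\,d\y$. The $j=0$ term is $f_0\cdot\vol(\boldsymbol{\Delta}) = f_0/n{\rm !}$, and since $\hat{f}_0 = f_0$ (the factorial weights are all $0{\rm !}=1$ for the constant term) this matches the $\hat{f}_0$ term. For each $j\ge 1$, apply \eqref{th1-3} with $t$ replaced by $j$: $\int_{\boldsymbol{\Delta}} f_j\,d\y = \frac{1}{(n+j){\rm !}}\hat{f}_j(\e) = \frac{1}{n{\rm !}}\hat{f}_j(\bxi_j)$, where the last step uses homogeneity of $\hat{f}_j$ (degree $j$) together with $\bxi_j = \e/((n+1)\cdots(n+j))^{1/j}$, so that $\hat{f}_j(\bxi_j) = ((n+1)\cdots(n+j))^{-1}\hat{f}_j(\e) = \frac{n{\rm !}}{(n+j){\rm !}}\hat{f}_j(\e)$. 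Summing and recognizing $1/n{\rm !} = \vol(\boldsymbol{\Delta})$ gives \eqref{th-final-1}.

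For part (ii), the key observation is that $(\bell\cdot\x)^t = \big(\sum_i \ell_i x_i\big)^t$ is a homogeneous polynomial of degree $t$, so I can apply \eqref{th1-3} directly, giving $\int_{\boldsymbol{\Delta}}(\bell\cdot\x)^t\,d\x = \frac{1}{(n+t){\rm !}}\hat{f}(\e)$ where $f(\x)=(\bell\cdot\x)^t$. It remains to compute $\hat{f}(\e)$. By the multinomial theorem, $(\bell\cdot\x)^t = \sum_{|\alpha|=t}\binom{t}{\alpha}\bell^\alpha\,\x^\alpha$ with $\binom{t}{\alpha} = t{\rm !}/(\alpha_1{\rm !}\cdots\alpha_n{\rm !})$, so $f_\alpha = \frac{t{\rm !}}{\alpha_1{\rm !}\cdots\alpha_n{\rm !}}\bell^\alpha$. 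Applying the Bombieri weighting \eqref{hat}, $\hat{f}_\alpha = \alpha_1{\rm !}\cdots\alpha_n{\rm !}\cdot f_\alpha = t{\rm !}\,\bell^\alpha$, hence $\hat{f}(\e) = \sum_{|\alpha|=t} t{\rm !}\,\bell^\alpha = t{\rm !}\sum_{|\alpha|=t}\bell^\alpha = t{\rm !}\,\mathrm{E}_t(\bell)$ by definition of $\mathrm{E}_t$. Substituting yields \eqref{th-final-2}.

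There is no serious obstacle here; the corollary is a routine specialization. The one point that needs a small amount of care is the homogeneity rescaling in part (i): one must check that applying \eqref{th1-3} with degree $j$ and then using the degree-$j$ homogeneity of $\hat{f}_j$ indeed produces exactly the constant $\frac{1}{n{\rm !}}$ in front of $\hat{f}_j(\bxi_j)$ — i.e. that the factor $((n+1)\cdots(n+j))$ absorbed by rescaling $\e\mapsto\bxi_j$ precisely converts $(n+j){\rm !}$ into $n{\rm !}$. This is immediate from $(n+j){\rm !} = n{\rm !}\,(n+1)(n+2)\cdots(n+j)$, so the bookkeeping closes cleanly. The other mild point is noting that in part (ii) $\bell$ is allowed to be an arbitrary vector in $\R^n$ (not necessarily positive), which is fine because $(\bell\cdot\x)^t$ is a genuine polynomial and Theorem \ref{th1} in the form \eqref{th1-3} applies to all homogeneous polynomials of degree $t$ regardless of sign of coefficients.
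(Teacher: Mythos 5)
Your proposal is correct and follows essentially the same route as the paper: linearity plus an application of \eqref{th1-3} to each homogeneous component for part (i), and the multinomial expansion of $(\bell\cdot\x)^t$ combined with the Bombieri weighting to get $\hat{f}(\e)=t{\rm !}\,\mathrm{E}_t(\bell)$ for part (ii). Your explicit check that the rescaling $\e\mapsto\bxi_j$ converts $(n+j){\rm !}$ into $n{\rm !}$ is exactly the bookkeeping the paper leaves implicit.
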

\begin{proof}
(i) As $\int_{\boldsymbol{\Delta}} f(\y)\,d\y=\sum_{j=0}^t \int_{\boldsymbol{\Delta}} f_j(\y)\,d\y$, use Theorem \ref{th1} for each $f_j$ and sum up.
Next, (ii) 
is a direct consequence of Theorem \ref{th1} and the fact that (using
$(\bell\cdot\x)^t\,=\,t{\rm !}\sum_{\vert\alpha\vert=t}\bell^\alpha\x^\alpha/({\alpha_1}{\rm !}\cdots{\alpha_n}{\rm !})$),
\[\hat{f}(\x)\,=\,\widehat{(\bell\cdot\x)^t}\,=\,t{\rm !}\sum_{\vert\alpha\vert=t}\bell^\alpha\,\x^\alpha\,=\,
t{\rm !}\,\sum_{\vert\alpha\vert=t}\,(\bell\otimes\x)^\alpha\,=\,t{\rm !}\,\mathrm{E}_t(\bell\otimes\x),\]
and therefore $\hat{f}(\e)\,=\,t{\rm !}\,\mathrm{E}_t(\bell\otimes\e)=t{\rm !}\,\mathrm{E}_t(\bell)$.
\end{proof}
Hence Corollary \ref{cor1} states that integrating $f$ on $\boldsymbol{\Delta}$ reduces to evaluate 
each $\hat{f}_j$ at the unique point $\bxi_j\in\boldsymbol{\Delta}$, and sum up.
In addition, all points $\bxi_j$, $j=1,\ldots,t$, are aligned in $\boldsymbol{\Delta}$; they are between the origin $0$ and the point $\e/n\in\boldsymbol{\Delta}$,
on the line joining $0$ to $\e/n$. 
Again formula \eqref{th-final-1} and \eqref{th-final-2} are extremely
simple. The former only requires evaluating $\hat{f}_j$ at $\bxi_j$ ($t$ evaluations) and the latter only requires
evaluating the polynomial $\mathrm{E}_t$ at the point $\bell\in\R^n$. This is contrast with \cite[Corollary 12, p. 307]{baldoni} 
which is more complicated (even for a single form $\ell$).\\

Finally, Theorem \ref{th1} can be extended to a class of homogeneous functions
\begin{proposition}\label{prop1}
Let $M\subset\R^n$ be a finite set of indices and let $f:(\R_+\setminus\{0\})^n\to\R$ be positively homogeneous of degree $t\in\R$ and of the form
\[\x\mapsto \sum_{\alpha\in\,M} f_\alpha\,\x^\alpha,\quad \alpha\in\R^n,\:\alpha_i>-1,\quad\forall i=1,\ldots,n,\]
for some real coefficients $(f_\alpha)_{\alpha\in\,M}$. Then
\begin{equation}
\label{prop1-1}
\displaystyle\int_{\boldsymbol{\Delta}} f(\x)\,d\x\,=\,\frac{1}{\Gamma(1+n+t)}\,\hat{f}(\e)\,=\,\frac{1}{n{\rm !}}\,\hat{f}(\bxi_{t}),
\end{equation}
where $\bxi_{t}=\frac{1}{\theta}\,\e\in\boldsymbol{\Delta}$ with $\theta^t=\Gamma(1+n+t)/\Gamma(1+n)$, and
\[\x\mapsto \hat{f}(\x)\,:=\,\sum_{\alpha\in\,M} \Gamma(1+\alpha_1)\cdots\Gamma(1+\alpha_n)\,f_\alpha\,\x^\alpha.\]
\end{proposition}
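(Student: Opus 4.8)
The plan is to obtain Proposition \ref{prop1} directly from formula \eqref{th1-1} of Theorem \ref{th1} specialized to $\z=\e$, since that identity was already proved for an \emph{arbitrary} positively homogeneous function of degree $t>-(1+n)$ that is absolutely integrable on $\boldsymbol{\Delta}_\z$. First I would verify that the present $f=\sum_{\alpha\in M}f_\alpha\,\x^\alpha$ meets these hypotheses: it is positively homogeneous of degree $t=\sum_i\alpha_i$ (the same for every $\alpha\in M$) because each $\x^\alpha$ is; the standing assumption $\alpha_i>-1$ for all $i$ forces $t=\sum_i\alpha_i>-n$, hence $t>-(1+n)$; and, $\boldsymbol{\Delta}$ being bounded with $\int_0^1 x_i^{\alpha_i}\,dx_i<\infty$ precisely when $\alpha_i>-1$, a triangle-inequality/Fubini argument yields $\int_{\boldsymbol{\Delta}}|f|\,d\x\le\sum_{\alpha\in M}|f_\alpha|\int_{\boldsymbol{\Delta}}\x^\alpha\,d\x<\infty$.

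Granting this, \eqref{th1-1} with $\z=\e$ reads $\int_{\boldsymbol{\Delta}}f\,d\x=\Gamma(1+n+t)^{-1}\int_{\R^n_+}f(\x)\exp(-\e^T\x)\,d\x$, so the second step is to evaluate the orthant integral. By absolute convergence and linearity over the finite set $M$ it suffices to treat one term, where the integral factorizes coordinatewise,
\[\int_{\R^n_+}\x^\alpha\exp(-\e^T\x)\,d\x=\prod_{i=1}^n\int_0^\infty x_i^{\alpha_i}\exp(-x_i)\,dx_i=\prod_{i=1}^n\Gamma(1+\alpha_i);\]
this is exactly the point where the hypothesis $\alpha_i>-1$ is used, being the precise condition for each one-dimensional integral to converge at $0$. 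Re-summing over $\alpha\in M$ with weights $f_\alpha$ identifies the right-hand side with $\hat f(\e)$ in the notation of the proposition, which gives $\int_{\boldsymbol{\Delta}}f\,d\x=\hat f(\e)/\Gamma(1+n+t)$, i.e. the first equality of \eqref{prop1-1}.

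For the second equality I would note that $\hat f(\x)=\sum_{\alpha\in M}\Gamma(1+\alpha_1)\cdots\Gamma(1+\alpha_n)f_\alpha\,\x^\alpha$ is again positively homogeneous of degree $t$, so with $\theta>0$ defined by $\theta^t=\Gamma(1+n+t)/\Gamma(1+n)$ and $\bxi_t=\e/\theta$ one has $\hat f(\e)=\theta^t\hat f(\bxi_t)$, whence $\hat f(\e)/\Gamma(1+n+t)=\hat f(\bxi_t)/\Gamma(1+n)=\hat f(\bxi_t)/n!$. The only assertion needing a separate small argument is $\bxi_t\in\boldsymbol{\Delta}$, i.e. $\theta\ge n$: for $t\ge 0$ this is the elementary bound $\Gamma(1+n+t)\ge n!\,n^t$, which follows from convexity of $s\mapsto\log\Gamma(1+n+s)$ together with $\psi(1+n)=H_n-\gamma>\log n$, and I would include that two-line estimate. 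None of the steps is genuinely hard; the only "obstacle" is bookkeeping — keeping the hypothesis $\alpha_i>-1$ visibly tied to convergence of the coordinatewise Gamma integrals, and invoking homogeneity of $\hat f$ only in its \emph{positive} form (the exponents $\alpha$ need not be integers).
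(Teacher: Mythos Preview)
Your proof is correct and follows essentially the same route as the paper: apply \eqref{th1-1} (the paper does it term-by-term for each $\x^\alpha$ with general $\z$ and then sets $\z=\e$, whereas you set $\z=\e$ first and then split the orthant integral, but this is only a cosmetic reordering), evaluate the factorized Gamma integrals, sum, and then invoke positive homogeneity of $\hat f$ for the second equality. Your explicit verification of the hypotheses of Theorem~\ref{th1} and your remark on $\bxi_t\in\boldsymbol{\Delta}$ go slightly beyond what the paper writes out.
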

\begin{proof}
Let $-1<\alpha\in\R^n$ with $\vert\alpha\vert=t$ be fixed and let $0<\z\in\R^n$. By Theorem \ref{th1}:
\begin{eqnarray*}
\int_{\boldsymbol{\Delta}_\z}\x^\alpha\,d\x&=&\frac{1}{\Gamma(1+n+t)}\,\int_{\R^n_+}\x^\alpha\exp(-\z^T\x)\,d\x\\
&=&\frac{1}{\Gamma(1+n+t)}\,\z^{-\alpha-\e}\,\prod_{i=1}^n\Gamma(1+\alpha_i).
\end{eqnarray*}
Summing up over all $\alpha\in M$ yields
\[\int_{\boldsymbol{\Delta}_\z}f(\x)\,d\x\,=\,\frac{\z^{-\e}}{\Gamma(1+n+t)}\,\sum_{\alpha\in M} f_\alpha\,\z^{-\alpha}\,\prod_{i=1}^n\Gamma(1+\alpha_i)\,=\,\frac{\z^{-\e}}{\Gamma(1+n+t)}\,\hat{f}(\z^{-\e}),\]
and with $\z=\e$ one obtains \eqref{prop1-1}. Finally, the last equality in \eqref{prop1-1}
uses the positive homogeneity of $\hat{f}$ and $\Gamma(1+n)=n{\rm !}$ for all  $n\in\N$.
\end{proof}

\begin{example}
\label{ex1}
For illustration purpose consider the elementary two-dimensional example (i.e., $n=2$) where
$\boldsymbol{\Delta}=\{\x\geq0:\: x_1+x_2\leq 1\}$. 
With $\x\mapsto f(\x):=f_{10}\,x_1+f_{11}\,x_1x_2+f_{02}\,x_2^2$ one obtains
$\bxi_1=\e/3$ and $\bxi_2=\e/\sqrt{12}$. The right-hand-side of \eqref{th-final-1} reads:
\[\frac{1}{2}(f_{10}\,(\bxi_1)_1+f_{11}\,(\bxi_2)_1(\bxi_2)_2+ 2\,f_{02}\,(\bxi_2)^2_2)\,=\,
\frac{1}{2}(\frac{f_{10}}{3}+\frac{f_{11}}{12}+2\frac{f_{02}}{12}).\]
For instance with $\x\mapsto f(\x):=x_1+x_1x_2+x_2^2$, one obtains:
\[\frac{1}{2}((\bxi_1)_1+(\bxi_2)_1(\bxi_2)_2+ 2(\bxi_2)^2_2)\,=\,\frac{1}{2}(\frac{1}{3}+\frac{1}{12}+2\frac{1}{12})\,=\,\frac{7}{24},\]
and indeed $\int_{\boldsymbol{\Delta}} fd\x=7/24$. In 
Figure \ref{fig1} is displayed the $2D$-Simplex $\boldsymbol{\Delta}$ with the points $\bxi_1=\e/3$ and $\bxi_2=\e/\sqrt{12}$.
\begin{figure}
\includegraphics[width=.7\textwidth]{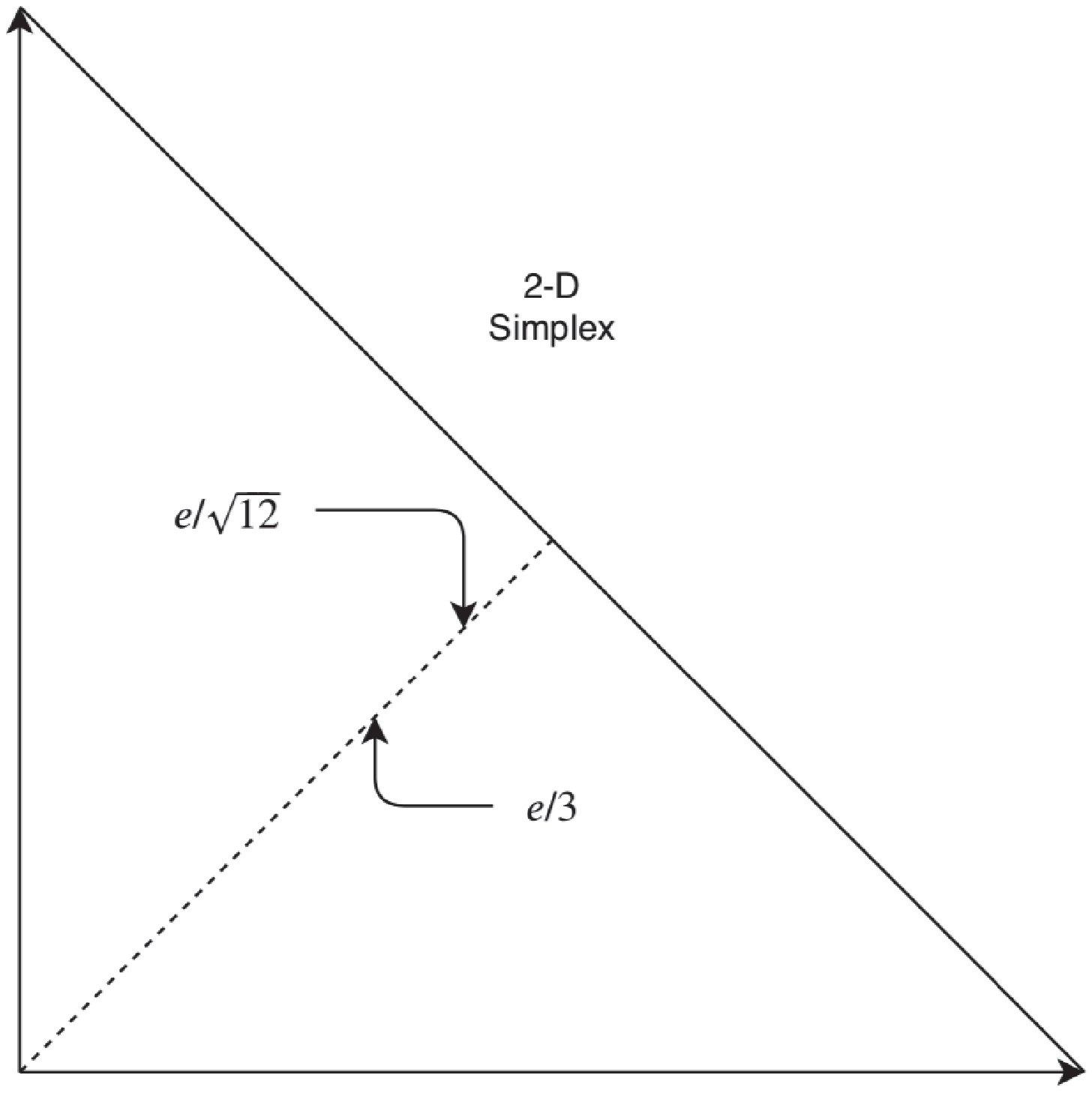}
\caption{$n=2$; Simplex $\boldsymbol{\Delta}$ and points $\bxi_1=\e/3$ and $\bxi_2=\e/\sqrt{12}$} 
\label{fig1}
\end{figure}
\end{example}

\subsection{Back to an arbitrary simplex}

Let $\om\subset\R^n$ be an arbitrary full-dimensional simplex (its $n$-dimensional Lebesgue volume is strictly positive). Then
$\om$ is mapped to $\boldsymbol{\Delta}$ by some affine transformation.
In a fixed basis, this is obtained by the change of variable $\y:=\A(\x-\ba)$ where
$\A$ is one non singular $n\times n$ real matrix and where $\ba\in\R^n$ is a vertex of $\om$. Then:
\[\x\in\om\quad\Leftrightarrow\quad \y:=\A\,(\x-\ba)\,\in\,\boldsymbol{\Delta}.\]
Next, if $f$ is a polynomial of degree $t$:
\begin{equation}
\label{back}
\int_{\om} f(\x)\,d\x\,=\,\frac{1}{{\rm det}(\A)}\,\int_{\boldsymbol{\Delta}} \underbrace{f(\A^{-1}\y+\ba)}_{g(\y)}\,d\y
\,=\,\frac{1}{{\rm det}(\A)}\,\int_{\boldsymbol{\Delta}} g(\y)\,d\y,\end{equation}
where $g$ has same degree as $f$.
Next, writing $g=\sum_{j=0}^t g_j(\y)$ with $g_j$ homogeneous of degree $j$, one has
\[\hat{g}_j(\y)\,=\,\hat{g}_j(\A(\x-\ba))\]
and therefore with $\bxi_j\in\boldsymbol{\Delta}$ as in Corollary \ref{cor1},
$\bpsi_j:=\A^{-1}\bxi_j+\ba\in\om$ and $\hat{g}_j(\bxi_j)=\hat{g}_j(\A(\bpsi_j-\ba))$. Therefore
combining \eqref{back} and \eqref{th-final-1} in Corollary \ref{cor1} yields:
\[\int_{\om} f(\x)\,d\x\,=\,\frac{1}{{\rm det}(\A)}\,\frac{1}{n{\rm !}}\,[\,\hat{g}_0+\sum_{j=1}^t\hat{g}_j(\A(\bpsi_j-\ba))\,],\]
the analogue for $\om$ of \eqref{th-final-1} for $\boldsymbol{\Delta}$.\\

In addition, let $f$ be homogeneous of degree $t$ with Waring-like decomposition\footnote{In number theory,
the Waring problem consists of writing any positive integer as a sum 
of a fixed number $g(n)$ of $n$th powers of integers, where $g(n)$ depends only on $n$. It generalizes to forms
as a generic form of degree $d$ can be written as a sum of $s$ $d$-powers of linear forms; $s$ is called the Waring rank of the form.}
\[\x\mapsto\quad\,f(\x)\,=\,\sum_{i=1}^s \varepsilon_i\,(\bc_i\cdot\x)^t,\]
 for finitely many $\bc_i\in\R^n$ and $\varepsilon_i\in\{-1,1\}$, $i=1,\ldots,s$.
Then letting  $\boldsymbol{\ell}_i=(\A^{-1})^T\bc_i$:
\begin{eqnarray*}
\int_{\om} f(\x)\,d\x&=&\frac{1}{{\rm det}(\A)}\,\sum_{i=1}^s\varepsilon_i\int_{\boldsymbol{\Delta}} (\bc_i\cdot \A^{-1}\y+\bc_i\cdot\ba)^t\,dt\\
&=&\sum_{i=1}^s\frac{\varepsilon_i}{{\rm det}(\A)}\,\int_{\boldsymbol{\Delta}} (\bc_i\cdot\ba+\boldsymbol{\ell}_i\cdot \y)^t \,dt\\
&=&\sum_{i=1}^s\frac{\varepsilon_i}{{\rm det}(\A)}\,\sum_{k=0}^t{t\choose k}(\bc_i\cdot\ba)^{t-k}\int_{\boldsymbol{\Delta}} (\boldsymbol{\ell}_i\cdot \y)^k \,dt\\
&=&\sum_{i=1}^s\frac{\varepsilon_i}{{\rm det}(\A)}\,\sum_{k=0}^t{t\choose k}(\bc_i\cdot\ba)^{t-k}\frac{k{\rm !}}{(n+k){\rm !}}
\mathrm{E}_k(\boldsymbol{\ell}_i)\\
&=&
\frac{1}{{\rm det}(\A)}\,\sum_{k=0}^t{t\choose k}\frac{k{\rm !}}{(n+k){\rm !}}\sum_{i=1}^s\varepsilon_i\,(\bc_i\cdot\ba)^{t-k}
\mathrm{E}_k(\boldsymbol{\ell}_i),
\end{eqnarray*}
where we have used Newton binomial formula and Corollary \ref{cor1}(ii) (and recall that $\mathrm{E}_k(\x)=\sum_{\vert\alpha\vert=k}\x^\alpha$).  
In particular, if $\ba=0$ and now with $\boldsymbol{\ell_i}:=(\A^{-1})^T\bc_i$, $i=1,\ldots,s$, it simplifies to
\[\int_{\om} f(\x)\,d\x\,=\,\frac{1}{{\rm det}(\A)}\,\frac{t{\rm !}}{(n+t){\rm !}}\sum_{i=1}^s\varepsilon_i\,\mathrm{E}_t(\boldsymbol{\ell}_i).\]

\section{Conclusion}
We have provided a very simple closed-form expression for the integral of an arbitrary  polynomial $f$
on an arbitrary full-dimensional simplex. Remarkably if $f$ has degree $t$, it consists of evaluating
$t$ polynomials (related to $f$) of degree $1,2,\ldots,t$, respectively, each at a unique point of the simplex.
To the best of our knowledge this simple formula is new and potentially useful in all applications
where such integrals need to be computed; for instance in finite and extended finite element methods.
Therefore it could provide a valuable addition to the arsenal of techniques already available 
for multivariate integration on polytopes.


\begin{thebibliography}{las}
\bibitem{galerkin}
Antonietti P. F., Houston P., Pennesi G.,
Fast Numerical Integration on Polytopic Meshes with Applications to Discontinuous Galerkin Finite Element Methods,
J. Sci. Comp., {\bf 77}, pp. 339--1370 (2018).
\bibitem{baldoni}
Baldoni V., Berline N., De Loera J.A., K\"oppe M., Vergne M. How to integrate a polynomial over a simplex,
Math. Comput., {\bf 80}, pp. 297--325 (2011).
\bibitem{bueler}
B\"ueler B., Enge A., Fukuda K., Exact volume computation for polytopes: A practical study. In Gil Kalai and G\"unter M.
Ziegler, editors, {\em Polytopes -- Combinatorics and Computation}, volume 29 of DMV Seminar, 
pages 131--154. Birkh\"auser Verlag, Basel (2000).
\bibitem{chin}
Chin E.B., Lasserre J.B., Sukumar N., Numerical integration of homogeneous functions on convex and nonconvex polygons and polyhedra, Comput. Mech., {\bf 56},  pp. 967--981 (2015).
\bibitem{suku}
Chin E. B., Sukumar N.,  Modeling curved interfaces without element-partitioning in the extended finite element method,
Int. J. Num. Methods Eng. to appear. {\tt https://doi.org/10.1002/nme.6150}
\bibitem{frenning}
Frenning G., Efficient Voronoi volume estimation for DEM simulations of granular materials under confined conditions,
MethodsX {\bf 2}, pp. e79-e90 (2015).
\bibitem{sturmfels}
Kozhasov K., Michalek M., Sturmfels B., Positivity certificates via integral representations, {\tt arXiv:1908.04191} (2019).
\bibitem{lass-avra}
Lasserre J.B., Avrachenkov, K.,
The multi-dimensional version of $\int_a^b  x^p dx$.  Amer.  Math.  Monthly, {\bf 108}, pp. 151--154 (2001).
\bibitem{lass-zeron}
Lasserre J.B., Zeron, E.S., Solving a class of multivariable integration problems via Laplace Lechniques,
Appl. Math. (Warsaw), {\bf 28}, pp. 391--405 (2001).
\bibitem{volume}
Lasserre J.B., Zeron E.S., A Laplace transform algorithm for the volume of a convex polytope  Journal of the ACM, {\bf 48}, pp. 1126--1140 (2001).
\bibitem{lass-poly}
Lasserre, J.B., Integration on a convex polytope, Proc. Am. Math. Soc., {\bf126}, pp. 2433--2441(1998).
\bibitem{leoni}
Leoni F., Yair Shokef Y., Attraction Controls the Inversion of Order by Disorder in Buckled Colloidal Monolayers,
Phys. Rev. Letters (2017).
\bibitem{nagy}
Nagy A.P., Benson D.J., On the numerical integration of trimmed isogeometric elements,
Comput. Methods Appl. Mech. Eng., {\bf 284}, pp. 165--185 (2015).
\bibitem{zhang}
Yong Zhang, Yu Ma, Hong-Liang Yin, He-Ping Tan, Natural element method for solving radiative transfer with or without conduction in three-dimensional complex geometries, J. Quantitative Spectroscopy \& Radiative Transfer, {\bf 129}, pp. 118--130  (2013).
\end{thebibliography}
\end{document}